\newtheorem{theorem}{Theorem}[section]
\newtheorem{proposition}{Proposition}[section]
\theoremstyle{definition}
\newtheorem{example}[theorem]{Example}
\theoremstyle{remark} 
\newtheorem{remark}[theorem]{Remark}
\numberwithin{equation}{section}
\numberwithin{figure}{section}
\numberwithin{algorithm}{section}
\newcommand{\field}[1]{{\mathbb{#1}}}
\newcommand{\C}{\field{C}} 
\newcommand{\R}{\field{R}}
\newcommand{\Dcal}{\mathcal{D}}
\newcommand{\Jcal}{\mathcal{J}}
\newcommand{\Mcal}{\mathcal{M}}
\newcommand{\Rcal}{\mathcal{R}}
\newcommand{\loc}{{\mathrm{loc}}}
\newcommand{\tm}{\subseteq} 
\newcommand{\di}{\partial}
\newcommand{\trans}{{\top}}
\newcommand{\dy}{\, \dif y}
\newcommand{\deta}{\, \dif\eta}
\newcommand{\qhat}{\widehat q}
\newcommand{\xhat}{\hat x}
\newcommand{\qtilde}{{\widetilde q}}
\newcommand{\utilde}{{\widetilde u}}
\newcommand{\vtilde}{{\widetilde v}}
\newcommand{\Ltilde}{{\widetilde L}}
\newcommand{\Mtilde}{{\widetilde M}}
\newcommand{\Ptilde}{{\widetilde P}}
\newcommand{\sigmatilde}{{\widetilde\sigma}}
\newcommand{\rmi}{\mathrm{i}}
\newcommand{\mmin}{\mathrm{min}}
\newcommand{\kmin}{k_{\min}}
\newcommand{\kmax}{k_{\max}}
\newcommand{\lambdamin}{\lambda_{\min}}
\newcommand{\lambdamax}{\lambda_{\max}}
\newcommand{\eps}{\varepsilon}
\newcommand{\uinfty}{u^\infty}
\newcommand{\ph}{\,\cdot\,}
\DeclareMathAlphabet{\mathbi}{\encodingdefault}{\rmdefault}{\bfdefault}{\itdefault}
\DeclareRobustCommand{\vec}[1]{\ifmmode\mathbi{#1}\else\textbf{\textit{#1}}\fi}
\DeclareMathOperator{\dif}{d\!}
\DeclareMathOperator{\supp}{supp}
\DeclareMathOperator{\rank}{rank}
\begin{document}

\title{A multifrequency MUSIC algorithm for locating small
  inhomogeneities in inverse scattering} 
\author{Roland Griesmaier and 
  Christian Schmiedecke\footnote{Institut f\"ur Mathematik,
    Universit\"at W\"urzburg, 97074 W\"urzburg, Germany
    ({\tt roland.griesmaier@uni-wuerzburg.de}, 
    {\tt christian.schmiedecke@mathematik.uni-wuerzburg.de})} 
}
\date{\today}

\maketitle

\begin{abstract}
  We consider an inverse scattering problem for time-harmonic acoustic or electromagnetic waves. The goal is to localize several small penetrable objects embedded inside an otherwise homogeneous background medium from observations of far field patterns of scattered fields corresponding to plane wave incident fields with one fixed incident direction but several different frequencies. Taking advantage of the smallness of the scatterers with respect to wave length we utilize an asymptotic representation formula for the far field pattern to design and analyze a MUSIC-type reconstruction method for this setup. We establish lower bounds on the number of frequencies and receiver directions required to recover the number and the positions of the scatterers for a given configuration by the reconstruction algorithm. Furthermore we apply the method to the practically interesting case of multifrequency backscattering data. Numerical examples are presented to document the potentials and limitations of this approach.
\end{abstract}

{\small\noindent
  Mathematics subject classifications (MSC2010): 35R30, (65N21)
  \\\noindent 
  Keywords: Inverse scattering, multiple frequencies, MUSIC algorithm, small scatterers
  \\\noindent
  Short title: Multifrequency inverse scattering
}

\section{Introduction}
\label{sec:Introduction}
The MUltiple SIgnal Classification (MUSIC) algorithm from signal processing has in the past years been successfully utilized as a non-iterative reconstruction method for detecting small point-like scattering obstacles or inhomogeneities from so-called multi-static scattering data (cf.~\cite{AmmIakLes04,Che01,Dev00,Dev12,Kir02,Sch86}), and it is well known that MUSIC-type reconstruction methods have strong connections to other qualitative reconstruction schemes such as, e.g., the linear sampling method~\cite{ColKir96} or the factorization method~\cite{Kir98} (see \cite{AmmGriHan07,Che01}). 
The basic assumption of this class of algorithms is that observations of scattered fields generated by sufficiently many different incident fields at a fixed frequency (i.e., multi-static scattering data) are available.

In this work we consider a different data set.
We assume that far field patterns of scattered fields corresponding to plane wave incident fields with just one fixed incident direction but several different frequencies are given.
We design and analyze a \emph{multifrequency MUSIC} algorithm for determining the number and the positions of finite collections of small scattering objects from these data.
We will also show how to adapt this reconstruction scheme to work with multifrequency backscattering data sets.

Our discussion of the multifrequency MUSIC algorithm is based on an asymptotic representation formula for the far field pattern of the scattered field as the size of the scatterers tends to zero with respect to the wave length of the incident field.
Such expansions have been widely studied in the literature (cf.~\cite{AmmIakMos04,VogVol00}).
However, since we apply the result to justify a multifrequency reconstruction scheme, we put particular emphasis on frequency dependence and give a short concise proof showing that the asymptotic formula holds uniformly across whole frequency bands.
Moreover, we include the possibility that the scatterers are absorbing, which is modeled by a frequency dependent imaginary part in the index of refraction.

Making use of the fact that the leading order term in the asymptotic expansion of the far field pattern resembles a so-called \emph{multivariate extended exponential sum}, the inverse scattering problem essentially reduces to a parameter estimation problem.
Related problems have recently been considered in \cite{KunPetRomOhe16,PloWis13,PotTas13} where different multivariate generalizations of Prony-type methods have been proposed,\footnote{For a recent survey on univariate Prony-type methods we refer to \cite{PloTas14}.} in \cite{Lia15,LiaFan16} where MUSIC-type methods for single-snapshot spectral estimation have been considered, and in \cite{CanFer14} where a rather different approach relying on total variation minimization has been discussed.
The multifrequency MUSIC algorithm considered in this work generalizes the one-dimensional scheme considered in \cite{LiaFan16} and is also related to multi-variate Prony-type methods. 
When compared to the methods in \cite{KunPetRomOhe16,PloWis13,PotTas13} our scheme has two advantages: (i) it does not require to compute common zeros of sets of multivariate polynomials, and (ii) we do not have to assume that the coefficients in the multivariate exponential sum have equal sign to avoid cancellation effects.
Indeed, we give sufficient conditions on the number of frequencies and observation directions that are needed to guarantee that the reconstruction algorithm recovers all scattering objects---at least theoretically---which generalizes related results on parameter estimation for bivariate exponential sums in \cite{DieIsk15,DieIsk16}.

Recently a multifrequency MUSIC algorithm has been considered for the (quasi-stationary) inverse boundary value problem of electrical impedance tomography in \cite{ABG13,GriHan15}, but the approach used in these works differs considerably from the method discussed here.
Furthermore, a multifrequency linear sampling method has been proposed in \cite{GuzCakBel10}, but in contrast to our work this method requires a much larger data set, namely observations of scattered fields generated by plane wave incident fields with sufficiently many different incident directions at several different frequencies.
On the other hand, a Prony-type reconstruction method for a much smaller data set that requires a single far field pattern corresponding to a plane wave incident field at a single frequency only has been considered in~\cite{Han12}.
Since this method uses the evanescent modes of the far field pattern only, it suffers from a certain lack of stability.
Finally, backpropagation algorithms for multifrequency scattering data have been analyzed in \cite{Gri11,Luk04}.

The paper is organized as follows.
In the next section we specify the problem setting and establish the asymptotic representation formula for the far field pattern due to an ensemble of finitely many small scattering objects.
The multifrequency MUSIC algorithm for determining the number and the positions of the scatterers is discussed in Section~\ref{sec:MUSIC}.
Then, in Section~\ref{sec:Numerics} we provide numerical results and briefly comment on how to adapt the multifrequency MUSIC scheme to work with backscattering data.
We conclude with some final remarks.

\section{Problem setting}
\label{sec:Setting}
Let
\begin{equation}
  \label{eq:DefD}
  D \,:=\, \bigcup_{m=1}^M D_m \,\tm\, \R^d
\end{equation}
be a collection of finitely many well-separated bounded domains in $\R^d$, $d = 2,3$.  
Each subdomain $D_m$, $m=1,\ldots,M$, represents a scattering object specified by its \emph{refractive index} $n_m$, all of which are embedded in a homogeneous background medium such that the refractive index $n$ of the whole configuration is given by
\begin{equation*}
  n \,=\,
  \begin{cases}
    n_m \,, &\text{in } D_m \,,\; m=1,\ldots,M \,,\\
    1 \,, &\text{in } \R^d\setminus D \,.
  \end{cases}
\end{equation*}
Allowing for absorbing scatterers, we assume that
\begin{equation}
  \label{eq:Defnl}
  n_m \,=\, 1 + q_{1,m} + \frac{\rmi}{k} q_{2,m} 
\end{equation}
for some $q_{1,m},q_{2,m} \in L^\infty(D_m)$ satisfying $q_{1,m}>-1$, $q_{2,m}\geq 0$, and $q_{1,m}+\frac{\rmi}{k}q_{2,m} \not= 0$ a.e.\ in $D_m$ (cf.~\cite[p.~268]{ColKre13}).
Here and throughout $k>0$ denotes the \emph{wave number}, and for later reference we also introduce the radius
\begin{equation}
  \label{eq:DefR}
  R := \max \{|x| \; : \; x \in D \}
\end{equation}
of the smallest ball around the origin enclosing $D$.

Next, let $u^i(\ph,\theta;k)$ be a time-harmonic \emph{plane wave incident field} with wave number $k>0$, i.e.,
\begin{equation*}
  u^i(x,\theta;k) \,=\, e^{\rmi k x\cdot\theta} \,, \qquad x\in\R^d \,,
\end{equation*}
where $\theta \in S^{d-1}$ indicates the \emph{direction of incidence}.
Then the inhomogeneous medium gives rise to a \emph{total field} $u(\ph,\theta;k)$ that satisfies
\begin{subequations}
  \label{eq:DirectProblem}
  \begin{gather}
    \Delta u + k^2 n u \,=\, 0 \qquad \text{in } \R^d \,,\\
    u = u^i+u^s \,,\\
    \lim_{r\to\infty} r^{\frac{1-d}{2}} \Bigl( \frac{\di u^s}{\di r} - \rmi k u^s \Bigr) \,=\, 0 \,,\qquad r = |x| \,,
  \end{gather}
\end{subequations}
where $u^s(\ph,\theta;k)$ denotes the \emph{scattered field}.
It is well known that this direct scattering problem has a unique solution $u(\ph,\theta;k) \in H^1_\loc(\R^d)$ (cf.~\cite{ColKre13,Kir11}).
Furthermore, the scattered field admits an asymptotic expansion at infinity of the form
\begin{equation*}
  u^s(x,\theta;k) \,=\, C_k \frac{e^{\rmi k |x|}}{|x|^{\frac{d-1}{2}}} \uinfty(\xhat,\theta;k) + O(|x|^{\frac{d+1}{2}}) \,, \qquad \xhat = \frac{x}{|x|} \in S^{d-1} \,, \; r = |x| \to \infty \,,
\end{equation*}
where $C_k = {e^{\rmi\pi/4}}/{\sqrt{8\pi k}}$ if $d=2$ and $C_k = k^2/(4\pi)$ if $d=3$.
Here, $\uinfty(\ph,\theta;k)$ denotes the \emph{far field pattern} of $u^s(\ph,\theta;k)$ which is given by
\begin{equation}
  \label{eq:FarFieldPattern}
  \uinfty(\xhat,\theta;k) \,=\, \int_{D} (n-1)(y) e^{-\rmi k \xhat\cdot y} u(y,\theta;k) \dy \,, \qquad \xhat \in S^{d-1} \,.
\end{equation}

Our aim is to determine information on the support of the scatterers $D_1,\ldots,D_M$ from observations of
\begin{equation}
  \label{eq:MeasurementData}
  \uinfty(\xhat_j,\theta;k_n) \,, \qquad j=1,\ldots,J \,,\; n=1,\ldots,2N \,,
\end{equation}
for $J>0$ mutually distinct \emph{receiver directions} $\xhat_j \in S^{d-1}$ and $2N>0$ different wave numbers $0 < k_1 < \cdots < k_{2N}$.
Note that in this work the incident direction $\theta \in S^{d-1}$ of $u^i$ is fixed, but the wave number $k$ varies.

We analyze the inverse problem in a restricted setting, assuming that the supports of the scatterers $D_m$ are of small diameter.
To this end we write
\begin{equation}
  \label{eq:RescaledDomain}
  D_m \,=\, z_m + \eps B_m \,, \qquad m=1,\ldots,M \,,
\end{equation}
where $z_m \in \R^d$, $m = 1,\ldots,M$, are the mutually distinct centers of mass of $D_m$, and the scaling parameter $\eps > 0$ is sufficiently close to zero.
We refer to $z_m$ and $D_m$ as the \emph{position} and the \emph{shape} of the scatterer $D_m$, respectively.
For consistency we assume in the following that
\begin{equation}
  \label{eq:RefractiveIndex}
  n_m(x) \,=\, 1+\qhat_{1,m}\Bigl(\frac{x-z_m}{\eps}\Bigr) + \frac{\rmi}{k}\qhat_{2,m}\Bigl(\frac{x-z_m}{\eps}\Bigr) \qquad \text{for some }\; \qhat_{1,m}, \qhat_{2,m} \in L^\infty(B_m) \,.
\end{equation}
Then the far field data from \eqref{eq:FarFieldPattern} have the following asymptotic representation.

\begin{theorem}
  \label{thm:Asy}
  Assume that $0<\kmin\leq k<\infty$ and let $n_m(x)$ as in \eqref{eq:RefractiveIndex} with $\qhat_{1,m}>-1$ and $\qhat_{2,m}\geq0$ a.e.\ in $D_m$, $m=1,\ldots,M$.\footnote{We require the lower bound $k \geq \kmin > 0$ to avoid degeneracy in \eqref{eq:Defnl} only.  If the scatterers are non-absorbing, then $k>0$ is sufficient.}
  Then, for $\xhat \in S^{d-1}$ the far field pattern satisfies
  \begin{equation}
    \label{eq:Asy}
    k^{d-2} \uinfty(\xhat,\theta;k) 
    \,=\, (k \eps)^d \sum_{m=1}^M \Bigl( \qtilde_{1,m}(\eta) + \frac{\rmi}{k} \qtilde_{2,m}(\eta) \Bigr) e^{\rmi k (\theta-\xhat)\cdot z_m} + O((k\eps)^{d+1})
  \end{equation}
  as $k\eps \to 0$, where
  \begin{equation*}
    \qtilde_{j,m} \,=\, \int_{B_m} \qhat_{j,m}(\eta) \deta \,=\, \frac1{\eps^d} \int_{D_m} q_{j,m}(y) \dy \,, \qquad j=1,2 \,.
  \end{equation*}
  The last term on the right hand side of \eqref{eq:Asy} is bounded by $C(k\eps)^{d+1}$, uniformly in all directions $\xhat \in S^{d-1}$.
  Here, the constant $C>0$ depends on $\kmin$ but is independent of $k$ and $\eps$.
\end{theorem}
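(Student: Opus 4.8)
The plan is to deduce \eqref{eq:Asy} from the Lippmann--Schwinger integral equation for the total field together with the representation \eqref{eq:FarFieldPattern} of the far field pattern, so that everything reduces to a \emph{frequency-uniform} estimate of $u$ on $D$. Recall that $u(\ph,\theta;k)$ satisfies
\begin{equation*}
  u(x,\theta;k) \,=\, u^i(x,\theta;k) + k^2 \int_D \Phi_k(x,y)\,(n(y)-1)\,u(y,\theta;k) \dy \,, \qquad x \in \R^d \,,
\end{equation*}
where $\Phi_k$ is the radiating fundamental solution of the Helmholtz operator $\Delta + k^2$ in $\R^d$ (we shall not keep track of the precise sign). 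Restricting this identity to $x \in D$ gives a closed second-kind equation for $u|_D$; feeding its solution back into \eqref{eq:FarFieldPattern} reduces the theorem to (i) showing that the total field $u$ is close on $D$ to the Born approximation $u^i$, uniformly in $k \ge \kmin$, and (ii) expanding the factor $e^{-\rmi k\xhat\cdot y}u^i(y,\theta;k) = e^{\rmi k(\theta-\xhat)\cdot y}$ in \eqref{eq:FarFieldPattern} around $y = z_m$. The leading term of \eqref{eq:Asy} is precisely what one obtains by replacing $u$ by $u^i$, rescaling, and evaluating the exponential at the centres $z_m$.

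For step (i) I would rescale to unit size. Writing $x = z_m + \eps\xi$ and $y = z_{m'} + \eps\eta$ as in \eqref{eq:RescaledDomain}, using the elementary scaling identity $\Phi_k(\eps z) = \eps^{2-d}\Phi_{k\eps}(z)$ (valid for $d = 2$ and $d = 3$), and noting that $(n-1)(z_m + \eps\eta) = \qhat_{1,m}(\eta) + \tfrac{\rmi}{k}\qhat_{2,m}(\eta)$, the restricted Lippmann--Schwinger equation becomes, in the scaled variable, a second-kind equation
\begin{equation*}
  (\id - \Ktilde_{k,\eps})\,\utilde \,=\, \utilde^i \quad \text{on } \bigcup_{m=1}^M B_m \,, \qquad \utilde(\xi)\big|_{B_m} := u(z_m + \eps\xi,\theta;k) \,,
\end{equation*}
in which $\Ktilde_{k,\eps}$ is a sum of a ``self'' part --- the volume potential over each $B_m$ with kernel $\Phi_{k\eps}$ and weight $\qhat_{1,m} + \tfrac{\rmi}{k}\qhat_{2,m}$, scaled by $(k\eps)^2$ --- and a ``cross'' part coupling distinct $B_m$, whose kernels are values of $\Phi_k$ at points a fixed positive distance apart (the centres $z_m$ are fixed, hence the $D_m$ stay well separated once $\eps$ is small).

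I expect the crux of the proof to be showing $\|\Ktilde_{k,\eps}\| \to 0$ as $k\eps \to 0$ \emph{uniformly in $k \ge \kmin$}. For the self part this rests on the fact that the volume potential over the bounded set $B_m$ with kernel $\Phi_{k\eps}$ has operator norm that stays bounded as $k\eps \to 0$ when $d = 3$ (its modulus is the $k$-independent Newtonian kernel $1/(4\pi|z|)$) and grows only logarithmically when $d = 2$, since there $\Phi_{k\eps}(z) = -\tfrac1{2\pi}\log|z| - \tfrac1{2\pi}\log(k\eps) + O(1)$; the weight is bounded by $\|\qhat_{1,m}\|_{L^\infty} + \kmin^{-1}\|\qhat_{2,m}\|_{L^\infty}$, which is where the dependence of $C$ on $\kmin$ originates. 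For the cross part $|\Phi_k|$ is bounded uniformly in $k \ge \kmin$ on the relevant fixed range of distances, so those contributions are of the same order or smaller. Altogether $\|\Ktilde_{k,\eps}\| = O\bigl((k\eps)^2\bigr)$ for $d = 3$ and $O\bigl((k\eps)^2|\log(k\eps)|\bigr)$ for $d = 2$, so once $k\eps$ lies below an absolute threshold a Neumann series yields $\|\utilde\|_{L^\infty} \le 2$ and $\|\utilde - \utilde^i\|_{L^\infty} = O\bigl((k\eps)^2|\log(k\eps)|\bigr)$, uniformly in $k \ge \kmin$; equivalently $\|u - u^i\|_{L^\infty(D)}$ is this small.

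For step (ii), substitute $u = u^i + (u - u^i)$ into \eqref{eq:FarFieldPattern}. The contribution of $u - u^i$ is bounded by $C\,|D|\,\|u - u^i\|_{L^\infty(D)}$, which after the multiplication by $k^{d-2}$ in \eqref{eq:Asy} is of strictly lower order than the claimed remainder. In the remaining Born term, rescaling via \eqref{eq:RescaledDomain} turns $\int_{D_m}$ into $\eps^d\int_{B_m}$, so that $\int_{B_m}\qhat_{j,m}(\eta)\deta = \qtilde_{j,m}$ emerges, while writing $e^{\rmi k(\theta-\xhat)\cdot(z_m + \eps\eta)} = e^{\rmi k(\theta-\xhat)\cdot z_m}(1 + O(k\eps))$ on $B_m$ (where $|\eta|$ is bounded) produces the factors $e^{\rmi k(\theta-\xhat)\cdot z_m}$ together with an $O(k\eps)$ correction; combining these and taking into account the $k^{d-2}$ in \eqref{eq:Asy} gives exactly the leading term of \eqref{eq:Asy} plus a remainder of order $(k\eps)^{d+1}$. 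Finally, every constant that appears depends only on $\|\qhat_{j,m}\|_{L^\infty}$, $|B_m|$, $\min_{m \ne m'}|z_m - z_{m'}|$ and $\kmin$, and every exponential that is estimated has modulus $1$; hence the remainder is $O\bigl((k\eps)^{d+1}\bigr)$ uniformly in $\xhat \in S^{d-1}$, with $C$ depending on $\kmin$ but independent of $k$ and $\eps$.
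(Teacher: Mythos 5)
Your proposal is correct and follows essentially the same route as the paper: the Lippmann--Schwinger equation restricted to $D$, a Neumann-series argument showing that the scattered field is of higher order on $D$ uniformly for $k\ge\kmin$, and a Taylor expansion of the Born term about the centres $z_m$, with the same uniform error bookkeeping. The only real difference is at the level of one estimate: you bound the integral operator after rescaling to the unit-size domains $B_m$ and using the low-frequency behaviour of $\Phi_{k\eps}$, obtaining $O\bigl((k\eps)^2|\log(k\eps)|\bigr)$ for $d=2$, whereas the paper works directly on $D$ and uses the monotonicity of $\sqrt{t}\,|H_0^{(1)}(t)|$ to get $O\bigl((k\eps)^{3/2}\bigr)$; either bound suffices for the Neumann series and for the final remainder estimate.
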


\begin{proof}
  Rewriting \eqref{eq:DirectProblem} as
  \begin{equation*}
    \Delta u^s + k^2 u^s \,=\, - k^2 (n-1) u \qquad \text{in } \R^d \,,
  \end{equation*}
  it follows immediately that the total field satisfies the Lippmann-Schwinger integral equation
  \begin{equation*}
    u(x,\theta;k) \,=\, u^i(x,\theta;k) + k^2 \int_D (n-1)(y) \Phi_k(x-y) u(y,\theta;k) dy \,,
  \end{equation*}
  where
  \begin{equation*}
    \Phi_k (x) \,=\,
    \begin{cases}
      \displaystyle \frac\rmi4 H^{(1)}_0(k|x|) \,, & d = 2 \,,\\[1em]
      \displaystyle \frac{e^{\rmi k |x|}}{4\pi|x|} \,, & d = 3 \,,\\
    \end{cases}
    \qquad\qquad x \in \R^d\setminus \{0\} \,,
  \end{equation*}
  denotes the fundamental solution to the Helmholtz equation.

  Given any $f \in C(D)$, we find that $(n-1)f \in L^\infty(D)$, and accordingly the volume potential $\int_D (n-1)(y) \Phi_k(x-y) f(y) \dy$ is continuously differentiable throughout $\R^d$ (cf.~\cite[p.~54]{GilTru01}).
  Hence the operator $T: C(D) \to C(D)$\,,
  \begin{equation*}
    (T f)(x) \,:=\, k^2 \int_D (n-1)(y) \Phi_k(k |x-y|) f(y) dy \,, \qquad x \in D \,,
  \end{equation*}
  is well-defined and recalling \eqref{eq:DefD}
  \begin{equation*}
    |(T f)(x)| \,\leq\, k^2 \|n-1\|_{L^\infty(D)} \|f\|_{C(D)}
    \begin{cases}
      \displaystyle \frac14 \sum_{m=1}^M \int_{D_m} |H^{(1)}_0(k|x-y|)| \dy \,, &d=2 \,,\\[1em]
      \displaystyle \frac1{4\pi} \sum_{m=1}^M \int_{D_m} \frac1{|x-y|} \dy \,, &d=3 \,.
    \end{cases}
  \end{equation*}
  For $d=2$, we can combine the fact that $\sqrt{t}|H^{(1)}_0(t)|$ is monotonically increasing for $t>0$ (cf.~\cite[p.~446]{Wat44}) with the limiting behavior of this function
  \begin{equation*}
    \lim_{t\to0} |\sqrt{t} H^{(1)}_0(t)| \,=\, 0 \qquad \text{ and } \qquad
    \lim_{t\to\infty} |\sqrt{t} H^{(1)}_0(t)| \,=\, \sqrt{\frac2\pi} \,,
  \end{equation*}
  (cf.~\cite[p.~74]{ColKre13}) to obtain the upper bound
  \begin{equation*}
    \sup_{t\geq 0}{\sqrt{t}|H^{(1)}_0(t)|} = \sqrt{\frac2\pi} \,.
  \end{equation*}
  Thus,
  \begin{equation*}
    \int_{D_m} |H^{(1)}_0(k|x-y|)| \dy \,=\,
    \begin{cases}
      O(\eps^2) &\text{if } x \in D_{l}\,,\; l\not= m \,,\\
      O(k^{-1/2}\eps^{3/2}) &\text{if } x \in D_m \,.
    \end{cases}
  \end{equation*}
  Similarly, we obtain for $d=3$ 
  \begin{equation*}
    \int_{D_m} \frac1{|x-y|} \dy \,=\,
    \begin{cases}
      O(\eps^d) &\text{if } x \in D_{l}\,,\; l\not= m \,,\\
      O(\eps^{d-1}) &\text{if } x \in D_m \,.
    \end{cases}
  \end{equation*}
  Therefore,
  \begin{equation*}
    \|T\| \,\leq\, C \|n-1\|_{L^\infty(D)}
    \begin{cases}
      k^{3/2}\eps^{3/2} &\text{if } d=2\,,\\
      k^2\eps^2 &\text{if } d=3\,,
    \end{cases}
  \end{equation*}
  where $\|\cdot\|$ denotes the operator norm on the space of bounded linear operators on $C(D)$.
  We note that the assumption $k\geq \kmin>0$ guarantees that the term $\|n-1\|_{L^\infty(D)}$ remains bounded.

  Suppose next that $k\eps>0$ is small enough such that $\|T\|\leq\frac12$. 
  Then $I-T:C(D)\to C(D)$ is invertible, and $\|(I-T)^{-1}\| \leq 2$.
  In particular,
  \begin{equation*}
    \|u^s\|_{C(D)} \,=\, \|Tu\|_{C(D)} \,=\, \|T(I-T)^{-1}u^i\|_{C(D)} 
    \,\leq\, 2 \|T\| \|u^i\|_{C(D)} \,=\,
    \begin{cases}
      O(k^{3/2}\eps^{3/2}) &\text{if } d=2\,,\\
      O(k^2\eps^2) &\text{if } d=3\,.
    \end{cases}
  \end{equation*}

  Recalling \eqref{eq:FarFieldPattern} we find that
  \begin{equation*}
    \uinfty(\xhat,\theta;k) 
    \,=\, k^2 \int_D (n-1)(y) e^{-\rmi k \xhat\cdot y} u^i(y,\theta;k) \dy
    + k^2 \int_D (n-1)(y) e^{-\rmi k \xhat\cdot y} u^s(y,\theta;k) \dy \,,
  \end{equation*}
  and using Taylor expansion we obtain for the first integral that
  \begin{equation*}
    \begin{split}
      k^2 \int_D (n-1)(y) e^{-\rmi k \xhat\cdot y}& u^i(y,\theta;k) \dy\\
      &\,=\, k^2 \sum_{m=1}^M \int_{B_m} \Bigl( \qhat_{1,m}(\eta) + \frac{\rmi}{k} \qhat_{2,m}(\eta) \Bigr) \bigl( e^{\rmi k (\theta-\xhat)\cdot z} + O(k\eps) \bigr) \eps^d \deta\\
      &\,=\, k^2 \eps^d \sum_{m=1}^M e^{\rmi k (\theta-\xhat)\cdot z} \int_{B_m} \Bigl( \qhat_{1,m}(\eta) + \frac{\rmi}{k} \qhat_{2,m}(\eta) \Bigr) \deta + O(k^3\eps^{d+1}) \,.
    \end{split}
  \end{equation*}
  The second integral satisfies
  \begin{equation*}
    \begin{split}
      \Bigl| k^2 \int_D (n-1)(y) e^{-\rmi k \xhat\cdot y} u^s(y,\theta;k) \dy \Bigr|
      &\,\leq\, k^2 \|n-1\|_{L^\infty(D)} \|u^s\|_{C(D)} \int_D 1 \dy\\
      &\,=\, \begin{cases}
        O(k^{7/2}\eps^{7/2}) &\text{if } d=2\,,\\
        O(k^4\eps^5) &\text{if } d=3\,.
      \end{cases}
    \end{split}
  \end{equation*}
  This ends the proof.
\end{proof}

\section{The multifrequency MUSIC scheme}
\label{sec:MUSIC}
Together with our basic assumption that the scattering objects are of sufficiently small diameter with respect to wave length, parameterized by the value of $\eps>0$ in \eqref{eq:RescaledDomain}, Theorem~\ref{thm:Asy} implies that the rescaled far field patterns $\frac1{\eps^d k_n}\uinfty(\ph,\theta;k_n)$, $n=1,\ldots,2N$, approximate a \emph{multivariate extended exponential sum}
\begin{equation}
  \label{eq:emes}
  \frac1{\eps^d k_n} \uinfty(\xhat,\theta;k_n) 
  \,\approx\, h(\xhat,\theta; k_n) 
  \,:=\, \sum_{m=1}^M (k_n \qtilde_{1,m} + \rmi \qtilde_{2,m}) e^{\rmi k_n(\theta-\xhat)\cdot z_m} 
\end{equation}
if $k_n\eps$ is sufficiently small.

In the following we assume that $k_n\qtilde_{1,m}+\rmi \qtilde_{2,m} \not= 0$ for all $m = 1,\ldots, M$, since otherwise the corresponding small scatterer would not contribute significantly to the far field data, and we define
\begin{equation}
  \label{eq:DefM'}
  M' \,:=\, M + |\{ m \; : \; \qtilde_{1,m} \not= 0 \}|\leq 2M\, .
\end{equation}
In practice usually neither the number of scatterers $M$ nor $M'$ is known a priori, but we assume that at least an upper bound $L \geq M'$ is available.
Furthermore, the data $\uinfty(\xhat_j,\theta;k_n)$ in \eqref{eq:MeasurementData} shall be given for $2N$ equidistant frequencies
\begin{equation}
  \label{eq:WaveNumbers1}
  k_n \,=\, n \kmin \,, \qquad n = 1,\ldots,2N \,,
\end{equation}
where
\begin{equation}
  0< \kmin\leq \frac{\pi}{2R} \quad \text{with $R>0$ from \eqref{eq:DefR}} \qquad \text{and} \qquad  N > L \geq M' \,. 
  \label{eq:WaveNumbers2}
\end{equation}
The upper bound on $\kmin$ in \eqref{eq:WaveNumbers2} guarantees that $|\kmin(\theta-\xhat_j)\cdot z_m| < \pi$ for all $m=1,\ldots,M$, i.e., $\kmin(\theta-\xhat_j)\cdot z_m$ is uniquely determined by $e^{\rmi \kmin(\theta-\xhat_j)\cdot z_m}$.
We note that the wave length $\lambda_\mmin$ corresponding to $\kmin \leq \pi/(2R)$ satisfies $\lambda_\mmin \geq 4R$.
Furthermore, we assume in the following that the receiver directions $\xhat_j \in S^{d-1}$, $j=1,\ldots,J$, are chosen such that each $d$-tuple $(\theta-\xhat_{j_1}, \ldots, \theta-\xhat_{j_d})$, $1\leq j_1<\cdots<j_d\leq J$, is linearly independent.

For a fixed receiver direction $\xhat_j$ different points $z_{m_1}\not=z_{m_2}$ might yield the same projection $(\theta-z_{m_1})\cdot\xhat_j = (\theta-z_{m_2})\cdot\xhat_j$ in \eqref{eq:emes}, and corresponding contributions in \eqref{eq:emes} might even cancel since we do not assume that all coefficients $\qtilde_{1,m}$, $m=1,\ldots,M$, have equal sign.
Hence, we introduce for each receiver direction $\xhat_j$, $j=1,\ldots,J$, sets and cardinalities\footnote{Here, $\delta_x \in \Dcal(\R)$ denotes the delta distribution with singularity in $x\in\R$.}
\begin{equation}
  \label{eq:DefMcalj}
  \Mcal_j \,:=\, \supp \Bigl( \sum_{m=1}^M (k_n\qtilde_{1,m}+\rmi \qtilde_{2,m}) \delta_{(\theta-\xhat_j)\cdot z_m} \Bigr) \tm \R \,, \qquad M_j := |\Mcal_j| \,,
\end{equation}
and
\begin{equation*}
  \Mcal_j' \,:=\, \supp \Bigl( \sum_{m=1}^M k_n\qtilde_{1,m} \delta_{(\theta-\xhat_j)\cdot z_m} \Bigr) \tm \R \,, \qquad M_j' := M_j + |\Mcal_j'| \,,
\end{equation*}
describing the (number of) exponents that (after simplification) actually appear in the non-trivial terms of the right hand side \eqref{eq:emes} for this receiver direction.
Accordingly, we rewrite the collapsed form of \eqref{eq:emes} as
\begin{equation*}
  h(\xhat_j,\theta;k_n) \,=\, \sum_{m=1}^{M_j} (k_n Q^{(j)}_{1,m} + \rmi Q^{(j)}_{2,m}) \zeta_m^n \,, \qquad n = 1,\ldots,2N \,,
\end{equation*}
where $\zeta_m := e^{\rmi \kmin f_m}$ for any $f_m \in \Mcal_j$.

Using these leading order terms, we follow the general idea of Prony-type methods and form a \emph{Hankel matrix}
\begin{equation*}
  H^{(j)} = \begin{bmatrix}
    h(\xhat_j,\theta;k_1) & h(\xhat_j,\theta;k_2) & \cdots & h(\xhat_j,\theta;k_{L+1})\\
    h(\xhat_j,\theta;k_2) & h(\xhat_j,\theta;k_3) & \cdots & h(\xhat_j,\theta;k_{L+2})\\
    \vdots   & \vdots   &        & \vdots\\
    h(\xhat_j,\theta;k_{2N-L}) & h(\xhat_j,\theta;k_{2N-L+1}) & \cdots & h(\xhat_j,\theta;k_{2N})
  \end{bmatrix}
  \in \C^{(2N-L)\times (L+1)} \,.
\end{equation*}
Using the structure of $h(\xhat_j,\theta;k_n)$ it has been shown in \cite{BadBerRic06} that the $H^{(j)}$ admits a factorization
\begin{equation*}
  H^{(j)} \,=\, V_{2N-L}^{(j)} D^{(j)} {V_{L+1}^{(j)}}^\trans \,,
\end{equation*}
where $V_l^{(j)} \in \C^{l\times M_j'}$, $l\geq2$, denotes a \emph{confluent Vandermonde matrix} that can be written as a block matrix
\begin{equation}
  \label{eq:DefV1}
  V_l^{(j)} \,=\,
  \begin{bmatrix}
    v_{1}^{(j)},\cdots, v_{M_j}^{(j)}
  \end{bmatrix}
\end{equation}
with
\begin{equation}
  \label{eq:DefV2}
  v_{m}^{(j)} \,=\, 
  \begin{bmatrix}
    1 & 0\\
    \zeta_m & 1\\
    \zeta_m^2 & 2\zeta_m\\
    \vdots &\vdots\\
    \zeta_m^{l-1} & (l-1)\zeta_m^{l-2}
  \end{bmatrix}
  \quad \text{if } Q^{(j)}_{1,m} \not= 0 \qquad \text{and} \qquad
  v_{m}^{(j)} \,=\,
  \begin{bmatrix}
    1\\
    \zeta_m\\
    \zeta_m^2\\
    \vdots \\
    \zeta_m^{l-1}
  \end{bmatrix}
  \quad \text{if } Q^{(j)}_{1,m} = 0 
\end{equation}
for $m = 1,\ldots,M_j$.
The matrix $D^{(j)} \in \C^{M_j'\times M_j'}$ is block-diagonal 
\begin{equation*}
  D^{(j)} \,=\,
  \begin{bmatrix}
    D_1^{(j)} & 0 & \cdots & 0\\
    0 & D_2^{(j)} & \ddots & \vdots\\
    \vdots & \ddots & \ddots & 0\\
    0 & \cdots & 0 & D_{M_j}^{(j)}
  \end{bmatrix}
\end{equation*}
with
\begin{align*}
  D_m^{(j)} &\,=\,
  \begin{bmatrix}
    (Q^{(j)}_{1,m} + \rmi Q^{(j)}_{2,m})\xi_m & Q^{(j)}_{1,m}\xi_m^2\\
    Q^{(j)}_{1,m}\xi_m^2 & 0
  \end{bmatrix} 
  &&\text{if } Q^{(j)}_{1,m} \not= 0\\
  \intertext{and}
  D_m^{(j)} &\,=\, [\rmi Q^{(j)}_{2,m}\xi_m] &&\text{if } Q^{(j)}_{1,m}=0 \,.
\end{align*}

\begin{remark}
  (i) If $Q^{(j)}_{1,m} = 0$ for all $m = 1,\ldots,M_j$, then the matrices $V_{2N-L}^{(j)}$ and $V_{L+1}^{(j)}$ reduce to standard Vandermonde matrices and the matrix $D^{(j)}$ is diagonal. 

  (ii) If $\qtilde_{2,m} = 0$ for all $m = 1,\ldots,M$, then we can replace \eqref{eq:emes} by
  \begin{equation*}
    \frac1{\eps^d k_n^2} \uinfty(\xhat,\theta;k_n) 
    \,\approx\, g(\xhat,\theta; k_n) 
    \,:=\, \sum_{m=1}^M \qtilde_{1,m} e^{\rmi k_n(\theta-\xhat)\cdot z_m} \,,
  \end{equation*}
  and work with $g$ instead of $h$.
  The results of the following analysis of the reconstruction scheme carry over accordingly.
  \hfill$\lozenge$
\end{remark}

Since $\zeta_1,\cdots,\zeta_{M_j}$ are mutually distinct by construction, the rank of the $V_l^{(j)} \in \C^{l\times M_j'}$ satisfies
\begin{equation}
  \label{eq:RankVl}
  \rank (V_l^{(j)}) \,=\, \min \{l, M_j'\}
\end{equation}
(see~\cite{Gau62}).
In particular, $N > L \geq M' \geq M_j'$ implies that $\rank (V_{L+1}^{(j)}) = \rank (V_{2N-L}^{(j)}) = M_j'$.
Furthermore, $D^{(j)}$ is clearly invertible, and thus
\begin{equation}
  \label{eq:RangeIdentity}
  \rank(H^{(j)}) \,=\, M_j' \qquad \text{and} \qquad \Rcal(H^{(j)}) \,=\, \Rcal(V^{(j)}_{2N-L}) \,.
\end{equation}

\begin{proposition}
  \label{pro:RangeCharacterization}
  Let $z \in B_R(0) \tm \R^d$, $\zeta := e^{\rmi \kmin (\theta-\xhat_j)\cdot z}$ and let
  \begin{equation}
    \label{eq:TestVector}
    \phi^{(j)}_z \,:=\, [1, \zeta, \zeta^2, \ldots, \zeta^{2N-L-1}]^\trans \in \C^{2N-L} \,.
  \end{equation}
  Then, $\phi^{(j)}_z \in \Rcal(H^{(j)})$ if and only if $\zeta \in \{\zeta_1,\ldots,\zeta_{M_j}\}$.
  In particular, ${\phi^{(j)}_z \in \Rcal(H^{(j)})}$ implies that ${(\theta-\xhat_j)\cdot z} \in \{{(\theta-\xhat_j)\cdot z_1}, \ldots, {(\theta-\xhat_j)\cdot z_M}\}$.
\end{proposition}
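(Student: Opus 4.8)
The plan is to leverage the factorization $H^{(j)} = V_{2N-L}^{(j)} D^{(j)} {V_{L+1}^{(j)}}^\trans$ together with the range identity \eqref{eq:RangeIdentity}, which reduces the claim to showing that $\phi^{(j)}_z \in \Rcal(V^{(j)}_{2N-L})$ if and only if $\zeta \in \{\zeta_1,\ldots,\zeta_{M_j}\}$. The ``if'' direction is the easy one: if $\zeta = \zeta_m$ for some $m$, then $\phi^{(j)}_z$ is exactly the first column of the block $v_m^{(j)}$ in \eqref{eq:DefV2} (with $l = 2N-L$), hence trivially lies in the range.

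For the ``only if'' direction I would argue by a dimension count on an augmented confluent Vandermonde matrix. Suppose $\zeta \notin \{\zeta_1,\ldots,\zeta_{M_j}\}$; I want to show $\phi^{(j)}_z$ is \emph{not} in $\Rcal(V^{(j)}_{2N-L})$. Consider the matrix $\widetilde V$ obtained by appending the single column $\phi^{(j)}_z = [1,\zeta,\ldots,\zeta^{2N-L-1}]^\trans$ to $V^{(j)}_{2N-L}$. This is again a confluent Vandermonde matrix (now with nodes $\zeta_1,\ldots,\zeta_{M_j},\zeta$, all mutually distinct, the new node $\zeta$ appearing with multiplicity one) of size $(2N-L)\times(M_j'+1)$. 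By the rank formula \eqref{eq:RankVl} for confluent Vandermonde matrices from \cite{Gau62}, $\rank(\widetilde V) = \min\{2N-L, M_j'+1\}$. Since $N > L \geq M' \geq M_j'$ we have $2N - L > L \geq M_j'$, so in fact $2N-L \geq M_j'+1$ and therefore $\rank(\widetilde V) = M_j'+1 > M_j' = \rank(V^{(j)}_{2N-L})$. Adding the column strictly increased the rank, so $\phi^{(j)}_z \notin \Rcal(V^{(j)}_{2N-L}) = \Rcal(H^{(j)})$, as desired.

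Finally, the last sentence of the proposition follows from the first: if $\phi^{(j)}_z \in \Rcal(H^{(j)})$ then $\zeta = e^{\rmi\kmin(\theta-\xhat_j)\cdot z} \in \{\zeta_1,\ldots,\zeta_{M_j}\}$, i.e.\ equals $e^{\rmi\kmin f_m}$ for some $f_m \in \Mcal_j$. By the upper bound $\kmin \leq \pi/(2R)$ in \eqref{eq:WaveNumbers2}, both $\kmin(\theta-\xhat_j)\cdot z$ (since $z \in B_R(0)$ and $|\theta - \xhat_j| \leq 2$) and $\kmin f_m$ (since $f_m = (\theta-\xhat_j)\cdot z_{m'}$ for some $m'$ with $z_{m'} \in D \subseteq B_R(0)$) lie in the interval $(-\pi,\pi)$ where the exponential map is injective; hence $(\theta-\xhat_j)\cdot z = f_m \in \Mcal_j$. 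Since every element of $\Mcal_j$ is of the form $(\theta-\xhat_j)\cdot z_{m'}$ by \eqref{eq:DefMcalj}, this gives $(\theta-\xhat_j)\cdot z \in \{(\theta-\xhat_j)\cdot z_1,\ldots,(\theta-\xhat_j)\cdot z_M\}$.

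The main obstacle I anticipate is making the confluent-Vandermonde rank argument airtight: one must be careful that appending $\phi^{(j)}_z$ genuinely yields a confluent Vandermonde matrix in the sense of \cite{Gau62} (a fresh node of multiplicity one, appended as the appropriate ``zeroth-derivative'' column), and that the strict inequality $2N-L \geq M_j'+1$ really does hold under the standing hypothesis \eqref{eq:WaveNumbers2} — here the slack comes precisely from $N > L$, so it is worth spelling out $2N - L \geq 2(L+1) - L = L + 2 \geq M_j' + 2 > M_j' + 1$. Everything else is bookkeeping with the factorization and the injectivity of $t \mapsto e^{\rmi t}$ on $(-\pi,\pi)$.
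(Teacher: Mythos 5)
Your proposal is correct and follows essentially the same route as the paper: both directions rest on the range identity \eqref{eq:RangeIdentity}, the ``only if'' part on the rank formula \eqref{eq:RankVl} applied to the concatenation $[V^{(j)}_{2N-L},\phi^{(j)}_z]$ together with the bound $2N-L\geq M_j'+2$ from \eqref{eq:WaveNumbers2}, and the final implication on the injectivity of $t\mapsto e^{\rmi t}$ on $(-\pi,\pi)$ guaranteed by $\kmin\leq\pi/(2R)$. Your explicit remarks on appending the new node with multiplicity one and on spelling out $2N-L\geq L+2\geq M_j'+2$ simply make precise what the paper states more tersely.
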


\begin{proof}
  Suppose that $\zeta \notin \{\zeta_1,\ldots,\zeta_{M_j}\}$. 
  From \eqref{eq:WaveNumbers2} we find that $2N-L \geq L+2 \geq M'+2 \geq M_j'+2$.
  Therefore, \eqref{eq:RankVl} shows that the concatenation $[V^{(j)}_{2N-L}, \phi^{(j)}_z]$ has rank $M_j'+1$, i.e., $\phi^{(j)}_z$ does not belong to the range of $V^{(j)}_{2N-L}$.
  Accordingly, the range identity \eqref{eq:RangeIdentity} implies that $\phi^{(j)}_z \notin \Rcal(H^{(j)})$.

  On the other hand, if $\zeta \in \{\zeta_1,\ldots,\zeta_{M_j}\}$, then $\phi^{(j)}_z \in \Rcal(H^{(j)})$ follows directly form \eqref{eq:RangeIdentity} together with \eqref{eq:DefV1}--\eqref{eq:DefV2}.
  Since $|\kmin (\theta-\xhat_j)\cdot y| < \pi$ for all $y \in \{z,z_1,\ldots,z_m\}$ by \eqref{eq:WaveNumbers2}, we find that
  \begin{equation*}
    \kmin (\theta-\xhat_j) \cdot z \,=\, \Im(\log\zeta) \,=\, \Im(\log(\zeta_{\tilde{m}})) \,=\, \kmin (\theta-\xhat_j) \cdot z_{m}
  \end{equation*}
  for some $1\leq\widetilde{m}\leq M_j$ and $1\leq m\leq M$.\footnote{Here $\log$ denotes the principal value of the complex logarithm.}
\end{proof}

The following theorems establish that using sufficiently many receiver directions $\xhat_j$, $j=1,\ldots,J$, we can indeed uniquely recover the positions $z_1,\ldots,z_m$ from the rescaled leading order terms $h(\xhat_j,\theta;k_n)$, $j=1,\ldots,J$ and $n = 1,\ldots,2N$.
We first assume that the coefficients $\qtilde_{1,m},\qtilde_{2,m}$, $m=1,\ldots,M$, in \eqref{eq:emes} are such that no terms in this sum cancel for any receiver direction.
This assumption will be relaxed in Theorem~\ref{thm:Identification2} below.
\begin{theorem}
  \label{thm:Identification1}
  Let $z \in B_R(0) \tm \R^d$, define $\phi^{(j)}_z$ for each receiver direction $\xhat_j$, $j=1,\ldots,J$, as in \eqref{eq:TestVector}, and let $J > (d-1)M$.
  Suppose that either
  \begin{equation}
    \label{eq:Definiteness}
    \qtilde_{1,m}>0 \quad \text{or} \quad \qtilde_{1,m}<0 \quad \text{or} \quad \qtilde_{2,m}>0 \qquad \text{simultaneously for all } m=1,\ldots,M\,.
  \end{equation}
  Then, $\phi^{(j)}_z \in \Rcal(H^{(j)})$ for all $j=1,\ldots,J$ if and only if $z \in \{z_1,\ldots,z_M\}$.
\end{theorem}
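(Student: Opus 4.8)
The plan is to combine Proposition~\ref{pro:RangeCharacterization} over all receiver directions. The ``if'' direction is immediate: if $z = z_{m_0}$ for some $m_0$, then for every $j$ the value $\zeta = e^{\rmi\kmin(\theta-\xhat_j)\cdot z_{m_0}}$ coincides with $\zeta_{\tilde m}$ for whichever $\tilde m \in \{1,\dots,M_j\}$ indexes the support class of $(\theta-\xhat_j)\cdot z_{m_0}$ in $\Mcal_j$ — and here condition \eqref{eq:Definiteness} is exactly what guarantees that this class is nonempty, i.e.\ that $z_{m_0}$'s contribution has not been annihilated by cancellation, so that $(\theta-\xhat_j)\cdot z_{m_0} \in \Mcal_j$ and hence $\zeta \in \{\zeta_1,\dots,\zeta_{M_j}\}$. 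Then $\phi^{(j)}_z \in \Rcal(H^{(j)})$ by the proposition. So the content is entirely in the ``only if'' direction.

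**For the ``only if'' direction**, suppose $\phi^{(j)}_z \in \Rcal(H^{(j)})$ for all $j = 1,\dots,J$. By Proposition~\ref{pro:RangeCharacterization}, for each $j$ there is some index $m(j) \in \{1,\dots,M\}$ with $(\theta-\xhat_j)\cdot z = (\theta-\xhat_j)\cdot z_{m(j)}$, equivalently $(\theta-\xhat_j)\cdot(z - z_{m(j)}) = 0$. The goal is to show $z = z_m$ for some $m$. **First I would** argue by pigeonhole: the map $j \mapsto m(j)$ sends $\{1,\dots,J\}$ into $\{1,\dots,M\}$, and since $J > (d-1)M$, some value $m_0$ is attained for at least $d$ distinct indices, say $j_1 < \cdots < j_d$ (indeed $(d-1)M+1$ indices force one fiber of size $\geq d$). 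Then $z - z_{m_0}$ is orthogonal to each of $\theta-\xhat_{j_1},\dots,\theta-\xhat_{j_d}$. By the standing assumption on the receiver directions — that every $d$-tuple $(\theta-\xhat_{j_1},\dots,\theta-\xhat_{j_d})$ with $1\leq j_1<\cdots<j_d\leq J$ is linearly independent — these $d$ vectors form a basis of $\R^d$, so the only vector orthogonal to all of them is zero. Hence $z = z_{m_0} \in \{z_1,\dots,z_M\}$, as desired.

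**The main obstacle** is mostly bookkeeping rather than a deep difficulty: I need to be careful that the index $m(j)$ delivered by Proposition~\ref{pro:RangeCharacterization} really is an index in $\{1,\dots,M\}$ (the proposition's conclusion is stated as $(\theta-\xhat_j)\cdot z \in \{(\theta-\xhat_j)\cdot z_1,\dots,(\theta-\xhat_j)\cdot z_M\}$, which is exactly what I need), and that the pigeonhole count is tight — with $J \geq (d-1)M + 1$ points distributed among $M$ fibers, if every fiber had $\leq d-1$ elements the total would be $\leq (d-1)M < J$, a contradiction, so some fiber has $\geq d$ elements. The only place \eqref{eq:Definiteness} enters is the ``if'' direction, to rule out that cancellation in \eqref{eq:emes} makes some $(\theta-\xhat_j)\cdot z_{m_0}$ drop out of $\Mcal_j$: under a uniform sign condition on the $\qtilde_{1,m}$ (or strict positivity of all $\qtilde_{2,m}$), the coefficient $\sum_{m : (\theta-\xhat_j)\cdot z_m = t}(k_n\qtilde_{1,m} + \rmi\qtilde_{2,m})$ attached to a present exponent $t$ is a sum of terms with the same sign in real or imaginary part, hence nonzero. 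I would state this cancellation-exclusion step explicitly as it is the substantive use of the hypothesis.
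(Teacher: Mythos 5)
Your proposal is correct and follows essentially the same route as the paper: the ``if'' direction uses \eqref{eq:Definiteness} to rule out cancellation (so every projection $(\theta-\xhat_j)\cdot z_m$ survives in $\Mcal_j$) and then invokes Proposition~\ref{pro:RangeCharacterization}, while the ``only if'' direction applies the proposition for each $j$ and uses the pigeonhole count $J>(d-1)M$ together with the linear independence of the $d$-tuples $(\theta-\xhat_{j_1},\ldots,\theta-\xhat_{j_d})$ to force $z=z_{m_0}$. Your write-up merely makes explicit two steps the paper leaves implicit (the pigeonhole count and the nonvanishing of the aggregated coefficients), which is fine.
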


\begin{proof}
  Assume that $z \in \{z_1,\ldots,z_M\}$.
  Since no terms in \eqref{eq:emes} cancel by assumption \eqref{eq:Definiteness}, we find that $M_j = M$ in \eqref{eq:DefMcalj}, and from Proposition~\ref{pro:RangeCharacterization} we obtain that $\phi_z^{(j)} \in \Rcal(H^{(j)})$ for all $j=1,\ldots,J$.

  On the other hand, suppose that $\phi_z^{(j)} \in H^{(j)}$ for all $j=1,\ldots,J$.
  Then Proposition~\ref{pro:RangeCharacterization} shows that for each $j=1,\cdots,J$ we can find $m_j \in \{1,\ldots,M\}$ such that $(\theta-\xhat_j)\cdot z = (\theta-\xhat_j)\cdot z_{m_j}$.
  Since $J > (d-1)M$, this implies that there exist $m  \in \{1,\ldots,M\}$ and $1 \leq j_1 < \cdots < j_d \leq J$ such that
  \begin{equation*}
    (\theta-\xhat_{j_1}) \cdot z =(\theta-\xhat_{j_1}) \cdot z_m \,, \ldots , (\theta-\xhat_{j_d}) \cdot z = (\theta-\xhat_{j_d}) \cdot z_m \,.
  \end{equation*}
  Since the $d$-tuple $(\theta-\xhat_{j_1}, \ldots, \theta-\xhat_{j_d})$ is linearly independent by assumption, we can conclude that $z = z_m$.
\end{proof}

In the general case we need roughly twice as many receiver directions to uniquely recover the positions $z_1,\ldots,z_m$.
\begin{theorem}
  \label{thm:Identification2}
  Let $z \in B_R(0) \tm \R^d$, define $\phi^{(j)}_z$ for each receiver direction $\xhat_j$, $j=1,\ldots,J$, as in \eqref{eq:TestVector}, and let $J > (d-1)(2M-1)$.
  Then, $\phi^{(j)}_z \in \Rcal(H^{(j)})$ for at least $(d-1)M+1$ receiver directions $\xhat_j$ if and only if $z \in \{z_1,\ldots,z_M\}$.
\end{theorem}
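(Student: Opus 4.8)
The plan is to follow the same two-direction strategy as in the proof of Theorem~\ref{thm:Identification1}, but now paying careful attention to the fact that cancellations in~\eqref{eq:emes} may occur for some receiver directions. The key observation is that cancellation of the contribution of a point $z_m$ for a direction $\xhat_j$ requires that there be \emph{another} point $z_{m'}$ with $(\theta-\xhat_j)\cdot z_{m'} = (\theta-\xhat_j)\cdot z_m$, i.e.\ that $\xhat_j$ lie on the hyperplane $\{\,\xhat : (z_m - z_{m'})\cdot\xhat = 0\,\}$ (intersected with $S^{d-1}$). For a fixed pair $m \neq m'$ this is a set of codimension one in $S^{d-1}$; more to the point, by the linear-independence hypothesis on the $d$-tuples $(\theta-\xhat_{j_1},\dots,\theta-\xhat_{j_d})$, at most $d-1$ of the chosen directions $\xhat_1,\dots,\xhat_J$ can satisfy a single such linear relation. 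This is the combinatorial engine of the proof.

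First I would prove the ``if'' direction. Suppose $z = z_m$ for some $m$. For how many receiver directions $\xhat_j$ can the term corresponding to $z_m$ be cancelled in~\eqref{eq:emes}? Cancellation at $\xhat_j$ forces $(\theta-\xhat_j)\cdot z_m = (\theta-\xhat_j)\cdot z_{m'}$ for some $m' \neq m$, hence $(z_m - z_{m'})\cdot\xhat_j = 0$. There are $M-1$ choices of $m'$, and for each the linear-independence assumption on the $\xhat_j$ permits at most $d-1$ of them to lie on the corresponding hyperplane; so $z_m$ is ``invisible'' for at most $(d-1)(M-1)$ of the $J$ directions. For every other direction $\xhat_j$ we have $\zeta = e^{\rmi\kmin(\theta-\xhat_j)\cdot z_m} \in \{\zeta_1,\dots,\zeta_{M_j}\}$, and Proposition~\ref{pro:RangeCharacterization} gives $\phi^{(j)}_z \in \Rcal(H^{(j)})$. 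The number of such good directions is at least $J - (d-1)(M-1) > (d-1)(2M-1) - (d-1)(M-1) = (d-1)M$, hence at least $(d-1)M+1$, as claimed. (I would double-check the edge case where cancellation requires more than two colliding points, but the bound ``at most $d-1$ directions per linear relation $z_m = z_{m'}$ on the projection'' still controls it, since any collision of the $z_m$-term with others still forces some relation $(z_m-z_{m'})\cdot\xhat_j=0$.)

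For the ``only if'' direction, suppose $\phi^{(j)}_z \in \Rcal(H^{(j)})$ for a set $\mathcal{J}$ of receiver directions with $|\mathcal{J}| \geq (d-1)M+1$. By Proposition~\ref{pro:RangeCharacterization}, for each $j \in \mathcal{J}$ there is an index $m_j \in \{1,\dots,M\}$ with $(\theta-\xhat_j)\cdot z = (\theta-\xhat_j)\cdot z_{m_j}$. By pigeonhole, since $|\mathcal{J}| \geq (d-1)M+1$, some fixed $m \in \{1,\dots,M\}$ satisfies $m_j = m$ for at least $d$ of the directions in $\mathcal{J}$; call them $\xhat_{j_1},\dots,\xhat_{j_d}$. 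Then $(\theta-\xhat_{j_i})\cdot(z - z_m) = 0$ for $i=1,\dots,d$, and since the $d$-tuple $(\theta-\xhat_{j_1},\dots,\theta-\xhat_{j_d})$ is linearly independent, $z = z_m$. The main obstacle, I expect, is being scrupulous in the ``if'' direction about the counting of directions for which $z_m$'s contribution is invisible — in particular making sure the bound $(d-1)(M-1)$ is genuinely an upper bound even when a projected point collides with several others at once and the coefficients conspire — but this follows cleanly from the per-relation bound $d-1$ together with there being only $M-1$ other points, and everything else reduces to the already-proved Proposition~\ref{pro:RangeCharacterization} and a pigeonhole argument identical in spirit to Theorem~\ref{thm:Identification1}.
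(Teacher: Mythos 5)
Your argument is correct and follows essentially the same route as the paper: in the ``if'' direction you bound the number of receiver directions for which the $z_m$-term can be cancelled by $(d-1)(M-1)$, using that any $d$ of the vectors $\theta-\xhat_j$ are linearly independent (the paper obtains the same count via a pigeonhole/contradiction), and the ``only if'' direction is exactly the pigeonhole argument from Theorem~\ref{thm:Identification1}. One cosmetic slip: the collision condition is $(\theta-\xhat_j)\cdot(z_m-z_{m'})=0$, i.e.\ $\xhat_j$ lies on the affine hyperplane $\{\xhat:\,(z_m-z_{m'})\cdot\xhat=(z_m-z_{m'})\cdot\theta\}$ rather than $\{\xhat:\,(z_m-z_{m'})\cdot\xhat=0\}$, but your per-relation bound of $d-1$ directions, argued through the $d$-tuples $(\theta-\xhat_{j_1},\ldots,\theta-\xhat_{j_d})$, is the correct and intended one.
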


\begin{proof}
  Suppose that $z \in \{z_1,\ldots,z_M\}$, and w.l.o.g.\ $z = z_1$.
  If $\phi_{z_1}^{(j)} \notin \Rcal(H^{(j)})$, then Proposition~\ref{pro:RangeCharacterization} shows that there exists $m\in\{1,\ldots,M\}$ such that $(\theta-\xhat_j)\cdot z_1 = (\theta-\xhat_j)\cdot z_m$.
  However, this can happen for $(d-1)(M-1)$ different observation direction $\xhat_j$ only, since otherwise there exist $m  \in \{2,\ldots,M\}$ and $1 \leq j_1 < \cdots < j_d \leq J$ such that
  \begin{equation*}
    (\theta-\xhat_{j_1}) \cdot z_1 = (\theta-\xhat_{j_1}) \cdot z_m \,, \ldots , (\theta-\xhat_{j_d}) \cdot z_1 = (\theta-\xhat_{j_d}) \cdot z_m \,.
  \end{equation*}
  Since the $d$-tuple $(\theta-\xhat_{j_1}, \ldots, \theta-\xhat_{j_d})$ is linearly independent by assumption, we can conclude that $z_1 = z_m$, which contradicts the assumption that $z_1, \ldots, z_m$ are mutually distinct.
  Accordingly, $\phi^{(j)}_z \in \Rcal(H^{(j)})$ for at least $(d-1)(2M-1)+1-(d-1)(M-1) = (d-1)M+1$ receiver directions $\xhat_j$, $j\in \{1,\ldots, J\}$.

  The other direction follows as in the proof of Theorem~\ref{thm:Identification1}.
\end{proof}

Introducing the matrix $F^{(j)}\in \C^{(2N-L)\times (L+1)}$ for $j=1,\ldots,J$ by
\begin{equation*}
  F^{(j)} \,=\, \begin{bmatrix}
    \frac1{k_1}\uinfty(\xhat_j,\theta;k_1) & \frac1{k_2}\uinfty(\xhat_j,\theta;k_2) & \cdots & \frac1{k_{L+1}}\uinfty(\xhat_j,\theta;k_{L+1})\\
    \frac1{k_2}\uinfty(\xhat_j,\theta;k_2) & \frac1{k_3}\uinfty(\xhat_j,\theta;k_3) & \cdots & \frac1{k_{L+2}}\uinfty(\xhat_j,\theta;k_{L+2})\\
    \vdots   & \vdots   &        & \vdots\\
    \frac1{k_{2N-L}}\uinfty(\xhat_j,\theta;k_{2N-L}) & \frac1{k_{2N-L+1}}\uinfty(\xhat_j,\theta;k_{2N-L+1}) & \cdots & \frac1{k_{2N}}\uinfty(\xhat_j,\theta;k_{2N})
  \end{bmatrix} \,,
\end{equation*}
we obtain from Theorem~\ref{thm:Asy} that
\begin{equation*}
  F^{(j)} \,=\, \eps^d H^{(j)} + O(\kmax^2\eps^{d+1}) \qquad \text{as } \kmax\eps\to0 \,,
\end{equation*}
where $\kmax = k_{2N}$ denotes the largest wave number.
The matrix  $F^{(j)}$ admits a singular value decomposition
\begin{equation*}
  F^{(j)} \,=\, \sum_{l=1}^{L+1} \sigma_l u_l v_l^* \,,
\end{equation*}
where $\sigma_1 \geq \sigma_2 \geq \cdots \geq \sigma_{L+1} \geq 0$ are the singular values of $F^{(j)}$ written in decreasing order with multiplicity, and similarly, the matrix $H^{(j)}$ can be decomposed as
\begin{equation*}
  H^{(j)} \,=\,  \sum_{l=1}^{L+1} \sigmatilde_l \utilde_l \vtilde_l^* \,,
\end{equation*}
where $\sigmatilde_1 \geq \sigmatilde_2 \geq \cdots \geq \sigmatilde_{L+1} \geq 0$ are the singular values of $H^{(j)}$ written in decreasing order with multiplicity.

Using standard perturbation results for matrices we find that (by appropriate enumeration of the singular values), as $\kmax\eps\to0$
\begin{equation*}
  \sigma_l \,=\, \eps^d \sigmatilde_l + O(\kmax^2\eps^{d+1}) \,, \qquad l=1,\ldots,L+1 
\end{equation*}
(cf.~\cite[Cor.~8.6.2]{GolLoa96}).
Since $\rank H^{(j)} = M_j'$, we expect for sufficiently small $\eps>0$ to see $M_j'$ eigenvalues of $F^{(j)}$ of the order $\eps^d$ while all the remaining eigenvalues have smaller magnitude. 
Hence, one possible strategy to estimate the numerical value of $M_j'$ may be to look for a gap in the set of singular values of $F^{(j)}$.

The orthogonal projection onto the range of $H^{j}$ is given by $\Ptilde_j \,:=\, \sum_{l=1}^{M_j'} \utilde_l \utilde_l^*$, and accordingly we denote by $P_j := \sum_{l=1}^{M_j'} u_l u_l^*$ the orthogonal projection onto the \emph{essential range} of $F^{(j)}$.
Theorem~8.6.5 in \cite{GolLoa96} implies that
\begin{equation*}
  P_j \,=\, \Ptilde_j + O(\kmax\eps) \,.
\end{equation*}

In Theorem~\ref{thm:Identification1} (assuming that no terms in \eqref{eq:emes} cancel and that $J>(d-1)M$) we have seen that a test point $z \in B_R(0)$ coincides with one of the positions $z_1,\ldots,z_M$ if and only if $\phi_z^{(j)} \in \Rcal(H^{(j)})$ for all $j=1,\ldots,J$, or equivalently if $\sum_{j=1}^J \|(I-\Ptilde_j)\phi_z^{(j)}\|_2 = 0$.
For small values of $\kmax\eps$ the projected test function $\Ptilde_j\phi_z^{(j)}$ is well approximated by $P_j\phi_z^{(j)}$, and $P_j$ can be computed by means of the singular value decomposition of the matrix $F^{(j)}$, i.e., by means of the observed far field data.
If we plot
\begin{equation}
  \label{eq:Indicator1}
  I_1(z) \,:=\, \frac1{\sum_{j=1}^J \|(I-P_j)\phi_z^{(j)}\|_2} \,, \qquad z\in B_R(0) \,,
\end{equation}
we thus expect to see peaks close to the actual positions $z_1,\ldots,z_M$.

\begin{remark}
  \label{rem:EstimateMj'1}
Evaluating $I_1$ in \eqref{eq:Indicator1} requires a priori information on the dimension of the essential range $F^{(j)}$, $j=1,\ldots,J$. 
  In particular for noisy data, estimating $M_j'$ by looking for a gap in the singular values sometimes turns out to be difficult because the gap is not clearly visible.

  In our implementation we thus follow an idea from \cite{BruHanVog03} and replace $M_j'$ in the definition of $P_j$ for all $j=1,\ldots,J$ by $\Mtilde>0$. 
Then we plot $I_1$ for increasing values of $\Mtilde$ starting with $\Mtilde=1$.
  This is reasonable, because if we replace $\Rcal(H^{(j)})$ by any $\Mtilde$-dimensional subspace $U\tm\Rcal(H^{(j)})$ then Proposition~\ref{pro:RangeCharacterization} reduces to $\phi_z^{(j)} \in U \Longrightarrow \zeta \in \{\zeta_1,\ldots,\zeta_{M_j}\}$, and accordingly the number of reconstructed scatterers should be monotonically increasing as $\Mtilde = \dim(U)$ increases until all $M$ scatterers are reconstructed for $\Mtilde\gtrsim M'_j$ for all $j=1,\ldots,J$.
  Since none of the singular vectors of $F^{(j)}$ corresponding to singular values $\sigma_l$, $l>M_j'$, is expected to be exactly of the form $\phi^{(j)}_z$, $z\notin\{z_1,\ldots,z_M\}$, the number of reconstructed objects should become stationary for moderately sized $\Mtilde\gtrsim M_j'$, $j=1,\ldots,J$.
  This procedure gives an estimate for $M$ and an upper bound $2M\geq\Ltilde\gtrsim M_j'$, $j=1,\ldots,J$.
  \hfill$\lozenge$
\end{remark}

Similarly, Theorem~\ref{thm:Identification2} (assuming that $J>(d-1)(2M-1)$) shows that a test point $z \in B_R(0)$ coincides with one of the positions $z_1,\ldots,z_M$ if and only if $\phi_z^{(j)} \in \Rcal(H^{(j)})$ for at least $(d-1)M+1$ receiver directions $\xhat_j$, $j\in\{1,\ldots,J\}$, or equivalently, if $\sum_{j\in\Jcal} \|(I-P_j)\phi_z^{(j)}\|_2 = 0$, where $\Jcal\tm\{1,\ldots,J\}$ is the index set corresponding to the $(d-1)M+1$ smallest elements in $\{ \|(I-P_j)\phi_z^{(j)}\|_2 \; : \; j=1,\ldots,J\}$.
Accordingly, we expect the imaging functional
\begin{equation}
  \label{eq:Indicator2}
  I_2(z) \,:=\, \frac1{\sum_{j\in\Jcal} \|(I-P_j)\phi_z^{(j)}\|_2} \,, \qquad z\in B_R(0) \,,
\end{equation}
to peak close to the actual positions $z_1,\ldots,z_M$.

\begin{remark}
  \label{rem:EstimateMj'2}
  Evaluating $I_2$ in \eqref{eq:Indicator2} requires a priori information on the number $M$ of unknown scatterers and on the dimension $M_j'$ of the essential range $F^{(j)}$, $j=1,\ldots,J$. 
  We proceed as in Remark~\ref{rem:EstimateMj'1} to obtain an estimate for $M$ and an upper bound $2M\geq\Ltilde\gtrsim M_j'$, $j=1,\ldots,J$, from the given far field observations.
  \hfill$\lozenge$
\end{remark}

\section{Numerical examples}
\label{sec:Numerics}
In this section we provide a three-dimensional numerical example to illustrate the performance of the multifrequency MUSIC reconstruction scheme, and we apply the algorithm to backscattering data.

\begin{example}
  \label{exa:1}
  We consider three ellipsoidal scattering objects centered at positions $z_1=(2,2,2)$, $z_2=(-1,-3,-1)$ and $z_3=(-3,1,2)$ with semiaxes $(0.2,0.2,0.1)$, $(0.1,0.1,0.2)$ and $(0.1,0.1,0.1)$, respectively, as shown in Figure~\ref{fig:NumEx1} (left).
  \begin{figure}
    \centering
    \includegraphics[height=5.0cm]{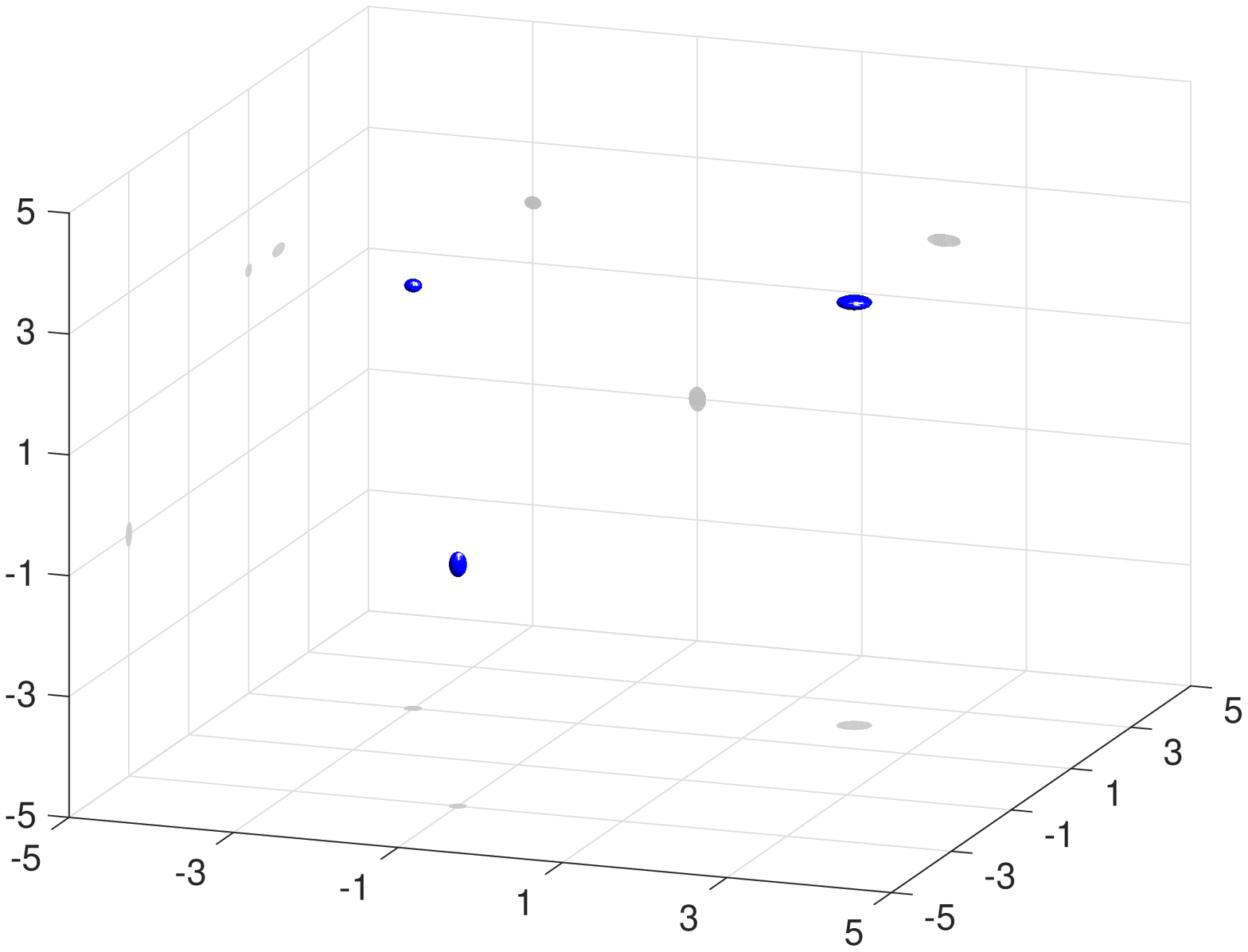}
    \qquad
    \includegraphics[height=5.0cm]{plots/3dmedium_recdir.eps}
    \caption{\small Left: Geometry of the scatterers.
      Right: Map projection of the observation directions.} 
    \label{fig:NumEx1}
  \end{figure}
  Accordingly the whole ensemble of scatterers is contained in the ball $B_R(0)$ with $R=5$.
  The index of refraction of the scattering objects shall be given by $n_1=0.5+0.5\rmi/k$, $n_2=2$ and $n_3=3+\rmi/k$, where as before $k>0$ denotes the wave number.

  Following our discussion in \eqref{eq:WaveNumbers1}--\eqref{eq:WaveNumbers2} we choose $\kmin = \pi/10$, $N=16$, and accordingly consider $32$ wavenumbers $k_n=n\pi/10$, $n=1,\ldots,32$ (i.e., $\kmin \approx 0.314$ and $\kmax \approx 10.053$ and accordingly the wavelength varies between $\lambdamin=0.625$ and $\lambdamax=20$).
  For the upper bound on $M'$ from \eqref{eq:DefM'} in the reconstruction algorithm we use $L=15$.

  We consider $32$ incident fields $u^i(\cdot,\theta;k_n)$, $n=1,\ldots,32$, with fixed incident direction $\theta = (1,0,0)$ and simulate the corresponding far field patterns $\uinfty(\xhat_j,\theta;k_n)$ at $12$ randomly distributed receiver directions $\xhat_j \in S^2$ such that each triple $(\theta-\xhat_{j_1}, \theta-\xhat_{j_2}, \theta-\xhat_{j_3})$, $1\leq j_1<j_2<j_3\leq 12$, is linearly independent using the C++ boundary element library BEM++ (cf.~\cite{SmiBetArrPhiSch15}).
  A map projection of the receiver directions is shown in Figure~\ref{fig:NumEx1} (right).
  In addition to the numerical error we perturb the simulated far field data by a uniformly distributed relative additive random noise of $10$\%.
  In total the data set consists of $384$ far field observations.

  First we use the values of the indicator function $I_1(z)$ from \eqref{eq:Indicator1} for $z \in [-5,5]^3$ to visualize the location of the scatterers.
  \begin{figure}
    \centering
    \includegraphics[height=5.0cm]{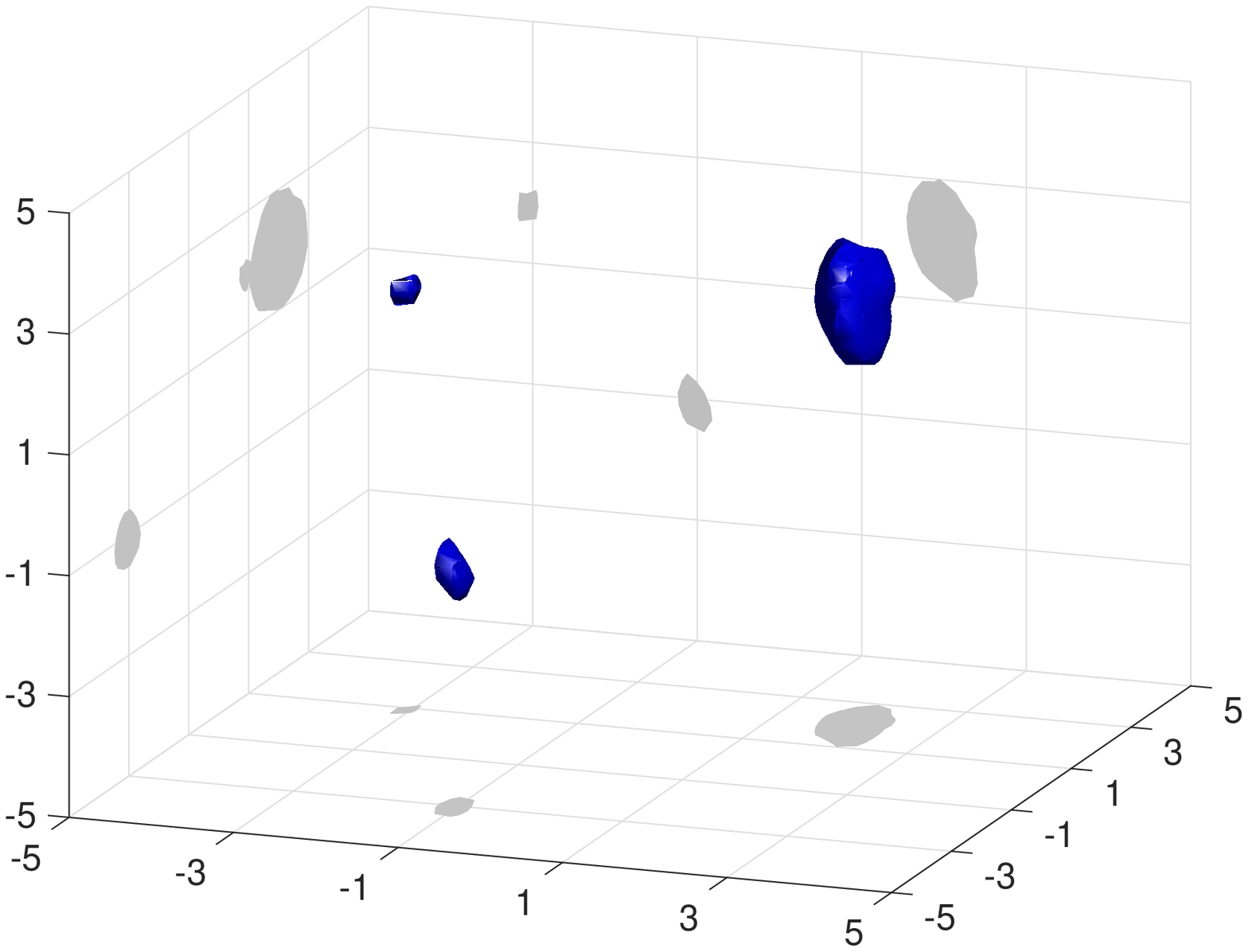}
    \qquad
    \includegraphics[height=5.0cm]{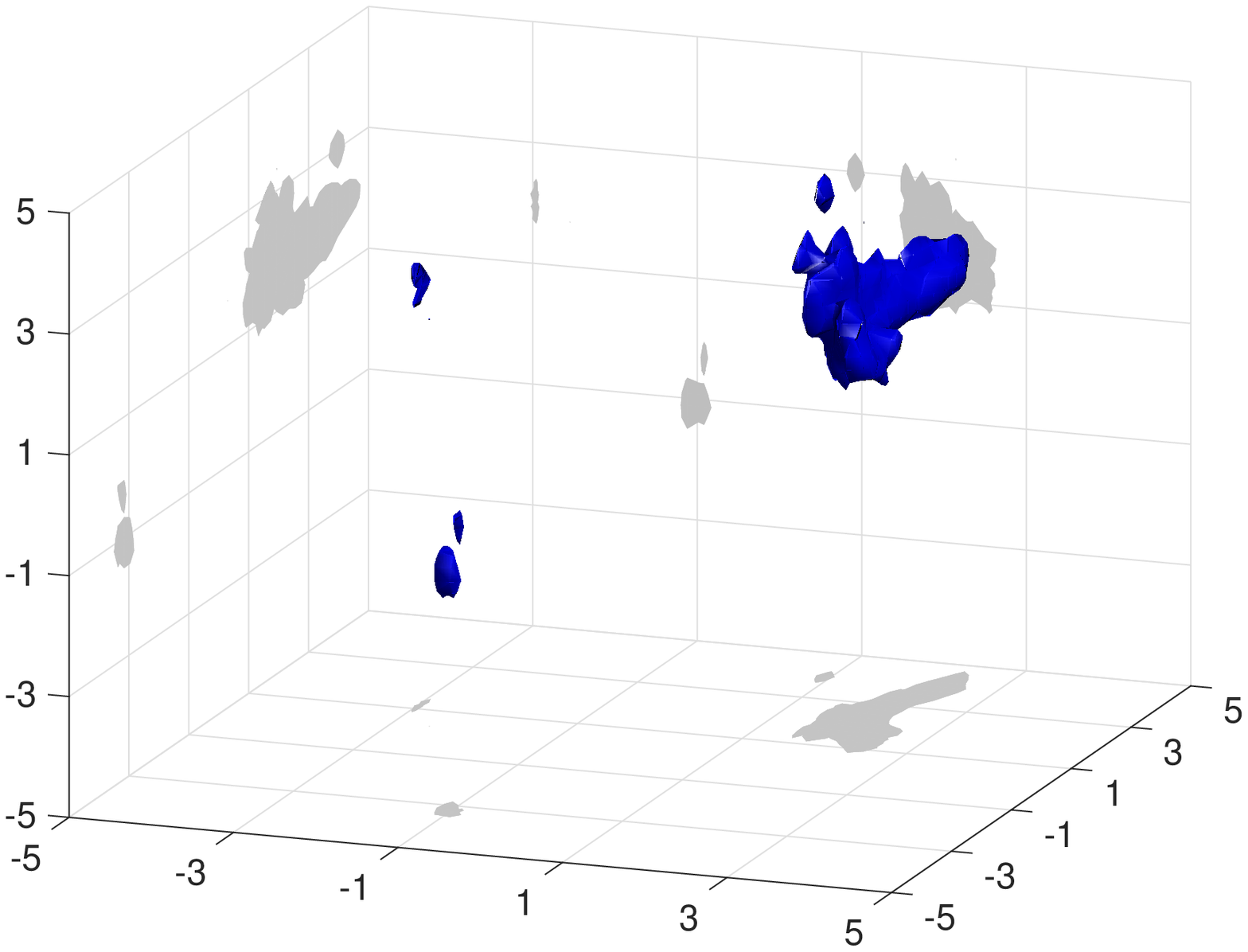}
    \caption{\small Reconstructions for Example~\ref{exa:1} (incident direction $\theta = (1,0,0)$): Isosurfaces of $I_1$ (left) and $I_2$ (right) at $55\%$ and $58\%$ of their respective maximum value.}
    \label{fig:NumEx2}
  \end{figure}
  Figure~\ref{fig:NumEx2} (left) shows an isosurface plot of $I_1$ at $55\%$ of its maximum value, where we used $\Ltilde = 6$ instead of $M'_j$ in the definition of the projections $P_j$, $j=1,\ldots,12$.
  This value has been determined by the iterative procedure outlined in Remark~\ref{rem:EstimateMj'1}.
  Note that the plots in Figure~\ref{fig:NumEx2} not only visualize the reconstructed positions of the scatterers but also their projections onto the boundaries of the box to enhance the three dimensional perspective.
  Moreover, we emphasize that the isosurfaces should not be mistaken as reconstructions of the shape of the scatterers; however they indeed contain the true scatterers and determine their positions rather accurately.

  An isosurface plot of $I_2$ at $58\%$ is shown in Figure~\ref{fig:NumEx2} (right), where we used $M=3$ and $\Ltilde = 6$.
  Again these values have been determined by the iterative procedure described in Remark~\ref{rem:EstimateMj'1}.
  The isosurface plots for $I_2$ are considerably less smooth than the corresponding plots for $I_1$ in all our numerical tests.
  \hfill$\lozenge$
\end{example}

In our second example below we apply the multifrequency MUSIC reconstruction algorithm to multifrequency backscattering data, i.e., instead of the data set from \eqref{eq:MeasurementData} we assume that backscattered far field data
\begin{equation*}
  \uinfty(-\xhat_j,\xhat_j;k_n) \,, \qquad j=1,\ldots,J \,,\; n=1,\ldots,2N \,,
\end{equation*}
for $J>0$ mutually distinct \emph{incident/receiver directions} $\xhat_j \in S^{d-1}$ and $2N>0$ different wave numbers $0 < k_1 < \cdots < k_{2N}$ are observed.
The main attraction of this backscattering measurement setup is that only a single sensor which acts both as source and receiver is required in practice.

Similar to \eqref{eq:emes}, Theorem~\ref{thm:Asy} implies that, for $j=1,\ldots,J$ 
\begin{equation*}
  \frac1{\eps^d k_n} \uinfty(-\xhat_j,\xhat_j;k_n) 
  \,\approx\, h(-\xhat_j,\xhat_j; k_n) 
  \,:=\, \sum_{m=1}^M (k_n \qtilde_{1,m} + \rmi \qtilde_{2,m}) e^{2 \rmi k_n \xhat_j\cdot z_m}
\end{equation*}
if $k_n\eps$ is sufficiently small.
Accordingly, assuming that the incident/receiver directions $\xhat_1,\ldots,\xhat_J \in S^2$ are such that each $d$-tuple $(\xhat_{j_1}, \ldots, \xhat_{j_d})$, $1\leq j_1<\cdots<j_d\leq J$, is linearly independent, and that the wave numbers $k_1,\ldots,k_{2N}$ satisfy \eqref{eq:WaveNumbers1}--\eqref{eq:WaveNumbers2}, then the multifrequency MUSIC reconstruction algorithm from Section~\ref{sec:MUSIC} can be applied to backscattering data without changes, and its theoretical justification including the Theorems~\ref{thm:Identification1}--\ref{thm:Identification2} remains valid in this case.

\begin{example}
  \label{exa:2}
  We consider the same scattering objects with the same material properties as in the previous example, and use the $12$ receiver directions $\xhat_1,\ldots,\xhat_{12}$ from Example~\ref{exa:1} as incident/receiver directions for the backscattering data in this example (we note that each triple $(\xhat_{j_1},\xhat_{j_2},\xhat_{j_3})$, $1\leq j_1<j_2<j_3\leq 12$, is linearly independent).
  Moreover we choose $\kmin = \pi/10$, $N=16$, and accordingly consider $32$ wavenumbers $k_n=n\pi/10$, $n=1,\ldots,32$, and we use $L=15$ for the upper bound on $M'$ in the reconstruction algorithm (same as in Example~\ref{exa:1}). 
  We simulate the backscattered far field patterns $\uinfty(-\xhat_j,\xhat_j;k_n)$, $j=1,\ldots,12$, $n=1,\ldots,32$, using BEM++ and add uniformly distributed relative additive random noise of $10$\%.
  In total again $384$ far field observations are used.

  \begin{figure}
    \centering
    \includegraphics[height=5.0cm]{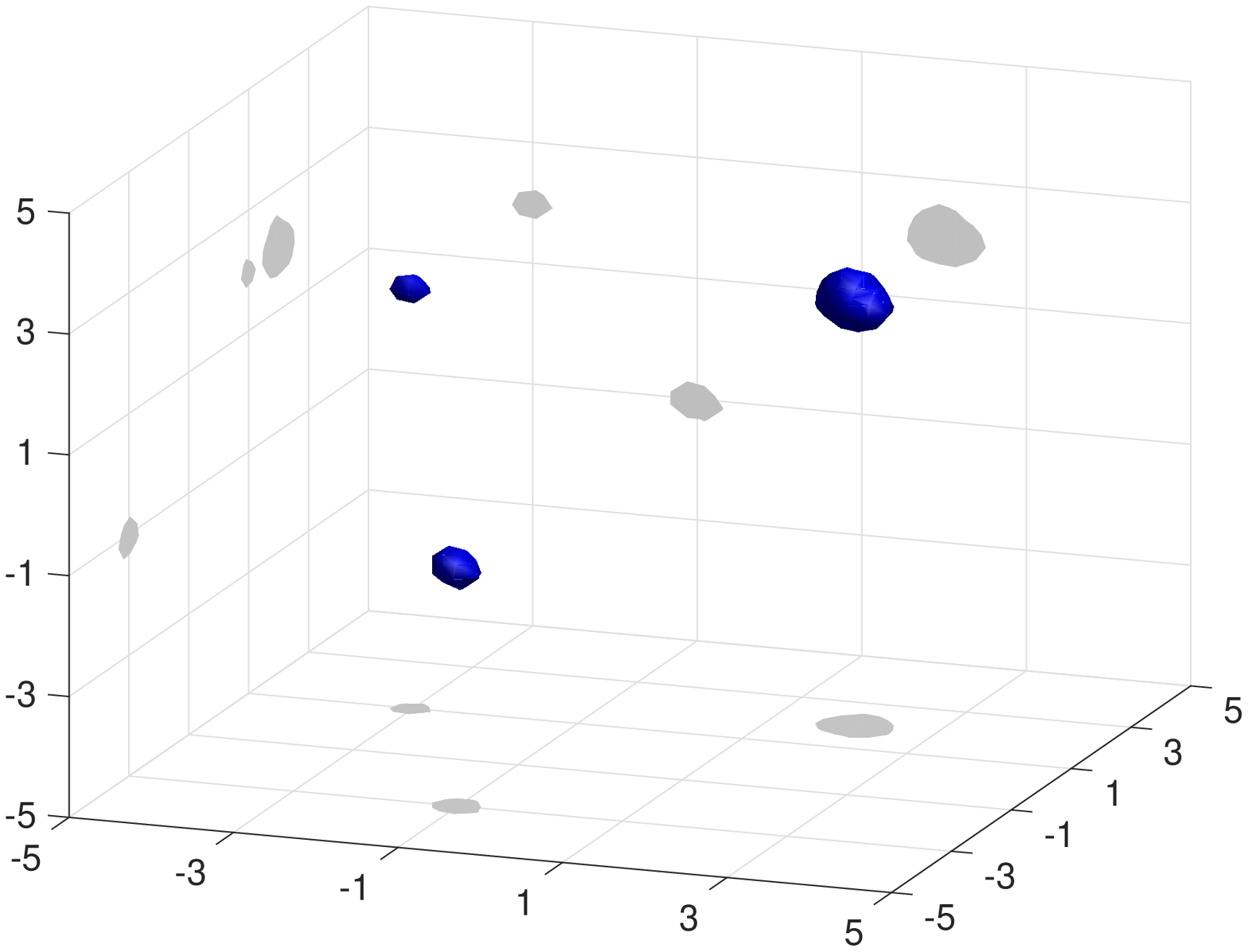}
    \qquad
    \includegraphics[height=5.0cm]{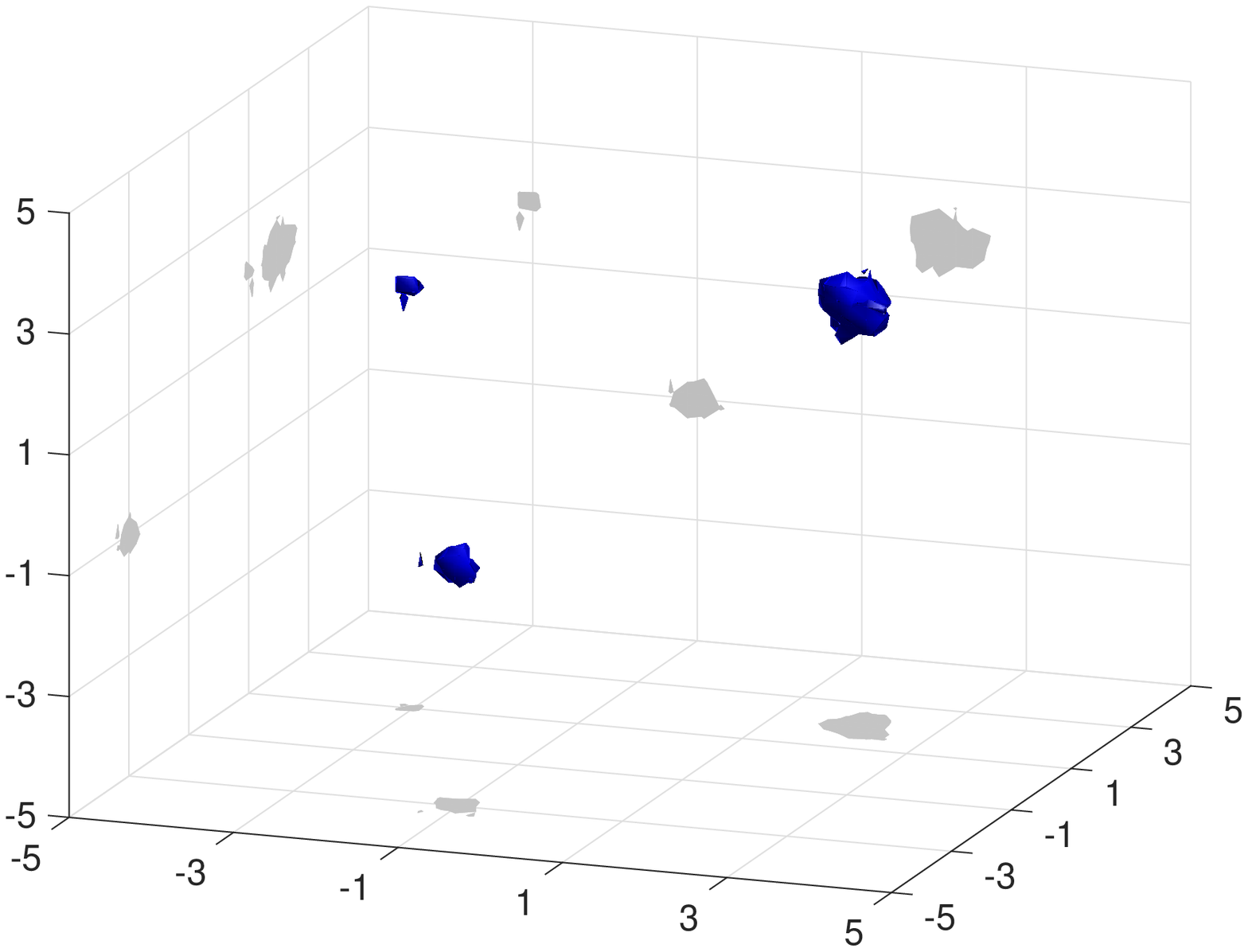}
    \caption{\small Reconstructions for Example~\ref{exa:2} (backscattering): Isosurfaces of $I_1$ (left) and $I_2$ (right) at $45\%$ and $52\%$ of their respective maximum value.}
    \label{fig:NumEx3}
  \end{figure}
  An isosurface plot of the indicator function $I_1$ from \eqref{eq:Indicator1} at $45\%$ of its maximum value is shown in Figure~\ref{fig:NumEx3} (left).
  Here we used $\Ltilde = 6$ instead of $M'_j$, $j=1,\ldots,12$.
  This value has been determined by the iterative procedure outlined in Remark~\ref{rem:EstimateMj'1}.
  Figure~\ref{fig:NumEx3} (right) shows an isosurface plot of $I_2$ at $52\%$, where we used $M=3$ and $\Ltilde = 6$.
  Again these values have been determined by the iterative procedure described in Remark~\ref{rem:EstimateMj'1}.
  A comparison of the plots from Figure~\ref{fig:NumEx2} (single incident wave) with Figure~\ref{fig:NumEx3} (backscattering data) does not show a significant difference; the reconstructions obtained from backscattering data seem to be a little more focused.
  \hfill$\lozenge$
\end{example}

\section{Concluding remarks}
\label{sec:Conclusion}
We have considered a qualitative reconstruction method for an inverse acoustic or electromagnetic medium scattering problem that efficiently processes multifrequency information.
We have given explicit lower bounds on the number of frequencies and on the incident/receiver directions that are required to reconstruct a given configuration of finitely many point-like scatterers.
The results and the reconstruction scheme apply to backscattering data as well.

We are currently working on extensions of this scheme for Maxwell's equations and consider possible generalizations of these techniques to recover extended (non-small) scattering objects from multifrequency (back-)scattering data.



{\small
   }

\end{document}